\documentclass[12pt]{article}

\usepackage{amsmath, epsfig, cite}
\usepackage{amssymb}
\usepackage{amsfonts}
\usepackage{latexsym}
\usepackage{float}
\usepackage{color}
\usepackage{booktabs}
\usepackage{graphicx}
\usepackage{caption}
\usepackage{mathtools}
\usepackage[colorlinks=true,linkcolor=blue,citecolor=blue,urlcolor=blue]{hyperref}

\newtheorem{thm}{Theorem}

\newtheorem{cor}{Corollary}

\newtheorem{lem}{Lemma}[section]

\newcommand{\pf}{\noindent{\it Proof.} }

\numberwithin{equation}{section}
\allowdisplaybreaks

\newcommand{\qed}{\hfill$\square$\medskip}

\begin{document}

	\begin{center}
	{\large\bf  Two $q$-Supercongruences Related to Double Sums  }
\end{center}
\vskip 2mm \centerline{Wei-Wei Qi}

\begin{center}
	{\footnotesize MOE-LCSM, School of Mathematics and Statistics, Hunan Normal University, Hunan 410081, P.R. China\\[5pt]
		{\tt wwqi2022@foxmail.com} \\[10pt]
	}
\end{center}

\vskip 0.7cm \noindent{\bf Abstract.} 
In this paper, by using $q$-identities and  differential operator techniques, we establish two $q$-supercongruences modulo the square of a cyclotomic polynomial, which are associated with truncated double basic hypergeometric $q$-series.

\vskip 3mm \noindent {\it Keywords}: Double $q$-series, cyclotomic polynomials, $q$-congruence, basic hypergeometric series.
\vskip 2mm
\noindent{\it MR Subject Classifications}: 33D15, 11A07, 11B65	

\section{Introduction} 

For any complex number $x$ and nonnegative integer $n$, define the shifted-factorial as
\[
\left(x\right)_n=\Gamma(x+n)/\Gamma(x),
\]
where $\Gamma(x)$ is the Gamma function.  In $1914$, Ramanujan \cite{S} (see also \cite[p. 352]{B-2}) discovered the identity
\begin{equation}\label{In-1}
\sum_{k=0}^{\infty}(6k+1)\frac{\left(\frac{1}{2}\right)_k^3}{k!^34^k}=\frac{4}{\pi},
\end{equation}	 
which was first proved by J. M. Borwein and P. B. Borwein \cite[pp. 177--187]{B-3}. In 1997, Van Hamme \cite{B-4} conjectured an interesting $p$-adic analogue of \eqref{In-1}: for any prime $p>3$,
\begin{equation}\label{In-1-0}
\sum_{k=0}^{(p-1)/2}(6k+1)\frac{\left(\frac{1}{2}\right)_k^3}{k!^34^k}\equiv (-1)^{(p-1)/2}p \pmod{p^4},
\end{equation}
The above congruence was later confirmed by Long \cite{B-5}. Meanwhile, in the same paper, Long also posed a congruence involving double sums: for any odd prime $p$,
\begin{equation}\label{In-2}
	\sum_{k=0}^{(p-1)/2}(-1)^k(6k+1)\frac{\left(\frac{1}{2}\right)_k^3}{k!^38^k}
	\sum_{j=1}^{k}\left(\frac{1}{(2j-1)^2}-\frac{1}{16j^2}\right)\equiv 0 \pmod{p},
\end{equation}	 
which was subsequently verified by Swisher \cite{B-6}.

For any complex numbers $a$, nonnegative integer $n$, the $q$-shifted factorial be defined as
\[
(a;q)_{\infty}=\prod_{j=0}^{\infty}(1-aq^j) \quad and \quad (a;q)_n=\frac{(a;q)_{\infty}}{(aq^n;q)_{\infty}}.
\]
For succinctness, we use the shorthand notation
\[
	(a_1,a_2,a_3,\cdots, a_t;q)_n=(a_1;q)_n(a_2;q)_n(a_3;q)_n\cdots(a_t;q)_n,
\]
where $t\in \mathbb{Z}^+$, and $n\in \mathbb{Z}^+\cup \{0,\infty\}$. 
For $n\in \mathbb{Z}^+$, the $q$-integer is defined as
\[
[n]=[n]_q=\frac{1-q^n}{1-q}=1+q+q^2+\cdots+q^{n-1}.
\]	
In addition, for a primitive $n$-th root of unity $\zeta$,  the $n$-th cyclotomic polynomial is given by
\begin{align*}
	\Phi_n(q)=\prod_{\substack{1\le k \le n\\[3pt](n,k)=1}}(q-\zeta^k),
\end{align*}

In $2020$, Gu and Guo \cite{B-7} gave a $q$-analogue of \eqref{In-2}: for any positive odd integer $n$,
\begin{equation}\label{In-3}
	\sum_{k=0}^{(n-1)/2}(-1)^k[6k+1]\frac{(q;q^2)_k^3}{(q^4;q^4)_k^3}
	\sum_{j=1}^{k}\left(\frac{q^{2j-1}}{[2j-1]^2}-\frac{q^{4j}}{[4j]^2}\right)\equiv 0 \pmod{\Phi_n(q)}.
\end{equation}
 Tang \cite[Theorem 1.1]{B-8} established two $q$-congruences on truncated double basic hypergeometric  series similar to \eqref{In-3}. For example, let $n$ be an odd integer. Then, modulo $\Phi_n(q)^2$, 
\begin{align}\label{In-4}
	\sum_{k=0}^{n-1}&[6k+1]\frac{(q;q^2)_k^2(q^{2};q^4)_k(q;q^4)_k^2}{(q^2;q^2)_k^2(q^{4};q^2)_k^3}q^{2k}\left(\sum_{i=1}^{k}\frac{q^{2i-1}}{[2i-1]^2}-\sum_{i=1}^{k}\frac{q^{4i}}{[4i]^2}\right) \notag \\
	&\equiv
	\begin{cases}
		[n]\frac{(q^2;q^4)_{(n-1)/4}^2}{(q^4;q^4)_{(n-1)/4}^2}\sum_{i=1}^{(n-1)/2}\frac{(-1)^{i-1}q^{2i}}{[2i]^2},& \text{if }   \quad  n\equiv 1 \pmod{4} ,\\
		\frac{[3n](q^2;q^4)_{(3n-1)/4}^2}{4[n]^2(q^4;q^4)_{(3n-1)/4}^2},& \text{if }  \quad  n\equiv 3 \pmod{4} .
	\end{cases}
\end{align}	 
Moreover, Tang \cite[Conjectures 4.1]{B-8} conjectured that \eqref{In-4}  hold true modulo $\Phi_n(q)^3$. Later, Luo, Wang and You \cite[Theorem 1.1]{B-9} extended \eqref{In-4} to the case modulo $\Phi_n(q)^3$.

Quite recently, Guo and Zhao derived the following congruence analogues to \eqref{In-1-0} by means of $q$-supercongruences: for any prime $p$, modulo $p^4$,
	\begin{align*}
	\sum_{k=0}^{K}&(6k+1)\frac{\left(\frac{1}{2}\right)_k^3\left(-\frac{1}{4}\right)_k}
	{k!^34^k(k+1)!} \notag\\
	&\equiv
	\begin{cases}
		-3p^3,& \text{if }   \quad    K=n-1 \quad and  \quad n\equiv 1\pmod{4} ,\\
		-p\frac{\left(-\frac{1}{2}\right)_{(p+1)/4}}{\left(1\right)_{(p+1)/4}}\left(1-\frac{p^2}{16}\sum_{j=1}^{(p+1)/4}\frac{1}{j^2}\right),& \text{if }    \quad  K=(n-1)/2 \quad  and \quad n\equiv 3\pmod{4}.
	\end{cases}
\end{align*}	 

In recent years, congruences involving double series have been an interesting topic
that attracted the attention of many researchers. For more $q$-supercongruences concerning double sums, the readers are referred to [\citen{W-2}, \citen{W-3}, \citen{W-4}, \citen{W-5}] and other relevant literature.
Inspired by the aforementioned work, the main objective of this paper is to establish new $q$-supercongruences involving double basic hypergeometric sums, which are analogous to \eqref{In-3}.

\begin{thm}\label{thm-1}
	Let $n$ be an odd positive integer with $n\equiv 1 \pmod{4}$. Then modulo $\Phi_n(q)^2$,
	\begin{align}\label{th-1}
		\sum_{k=0}^{M}&[6k+1]\frac{(q;q^2)_k^3(q^{-1};q^4)_k}{(q^4;q^4)_k^3(q^{4};q^2)_k}q^{k^2+3k}\left(\sum_{i=1}^{k}\frac{q^{2i-1}}{[2i-1]^2}-\sum_{i=1}^{k}\frac{q^{4i}}{[4i]^2}\right) \notag \\
		&\equiv
		\begin{cases}
			0,& \text{if }   \quad    M=(3n+1)/4 \quad and  \quad n>5 ,\\
			-\frac{3q(1+q)[n]}{2},& \text{if }    \quad  M=n-1 \quad  and \quad n\geq 5.
		\end{cases}
	\end{align}	 
\end{thm}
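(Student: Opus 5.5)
The plan is to obtain the double sum as a derivative of a one-parameter family of single sums, following the differential-operator approach of Gu--Guo and Tang. I would introduce a parameter $a$ and consider
\[
F(a)=\sum_{k=0}^{M}[6k+1]\frac{(aq;q^2)_k(q/a;q^2)_k(q;q^2)_k(q^{-1};q^4)_k}{(aq^4;q^4)_k(q^4/a;q^4)_k(q^4;q^4)_k(q^4;q^2)_k}q^{k^2+3k}.
\]
The key observation is that
\[
\frac{(aq;q^2)_k(q/a;q^2)_k}{(aq^4;q^4)_k(q^4/a;q^4)_k}
\]
is invariant under $a\mapsto 1/a$, so its first derivative at $a=1$ vanishes. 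A direct computation of the second derivative at $a=1$ gives
\[
\frac{(q;q^2)_k^2}{(q^4;q^4)_k^2}
\]
times a constant multiple of
\[
\sum_{i=1}^k\left(\frac{q^{2i-1}}{(1-q^{2i-1})^2}-\frac{q^{4i}}{(1-q^{4i})^2}\right),
\]
which equals $(1-q)^{-2}$ times the inner double sum in \eqref{th-1}. Up to an explicit factor such as $-2(1-q)^{-2}$, the left-hand side of \eqref{th-1} is therefore $\frac{d^2}{da^2}F(a)\big|_{a=1}$, more conveniently written as $(a\frac{d}{da})^2F(a)\big|_{a=1}$ to avoid first-derivative terms.

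Next I would evaluate or approximate $F(a)$ modulo a suitable product. The summand has the shape of a terminating very-well-poised ${}_8\phi_7$ in base $q^2$/$q^4$ mixed with the quadratic factor $q^{k^2+3k}$. I would identify it as a specialization of a known $q$-identity: either the Rahman-type quadratic summation used by Guo and Zhao for the single-sum analogue of \eqref{In-1-0}, or Gasper's $q$-analogue of a quadratic transformation. From this identity I would derive a closed form, or at least a $q$-congruence, for $F(a)$ modulo
\[
\Phi_n(q)(1-aq^n)(a-q^n).
\]
For $a=q^{\pm n}$ the series terminates naturally. For $M=(3n+1)/4$ the factor $(q^{-1};q^4)_k$ together with $(aq;q^2)_k$ truncates the sum; the condition $n\equiv1\pmod 4$ is exactly what makes $(3n+1)/4$ the right truncation point. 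For $M=n-1$ the terms with $k$ between these two truncation points have $(q;q^2)_k$ in the numerator divisible by $\Phi_n(q)$ while the denominator is not, so both truncations agree modulo $\Phi_n(q)$ at $a=1$. The extra $[n]$ on the right-hand side must come from the tail beyond $(3n+1)/4$.

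Then I would differentiate the resulting congruence twice in $a$ and set $a=1$. Writing $F(a)=G(a)+\Phi_n(q)(1-aq^n)(a-q^n)H(a)$ with $G$ explicit, the second derivative of the error term at $a=1$ contributes only
\[
\Phi_n(q)\cdot\bigl(\text{something with }(1-q^n)\text{ factors}\bigr)\equiv 0 \pmod{\Phi_n(q)^2},
\]
using that $1-q^n\equiv0\pmod{\Phi_n(q)}$ and that $H$ has no pole at $a=1$ modulo $\Phi_n(q)$. Differentiating $G$ then yields $0$ in the first case and $-\tfrac{3q(1+q)[n]}{2}$ in the second, after simplifying factors like $(q^{-1};q^4)$ and $(q;q^2)_{(n-1)/2}$ modulo $\Phi_n(q)$.

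The main obstacle is the second step: finding an exact identity for the parametric sum with both the quadratic power $q^{k^2+3k}$ and the unusual factor $(q^{-1};q^4)_k/(q^4;q^2)_k$, and verifying that $H(a)$ has denominators coprime to $\Phi_n(q)$ near $a=1$. I would first try the Rahman quadratic summation with parameters specialized as in Guo--Zhao, inserting $a$ symmetrically. If no clean closed form appears, I would fall back on the ``creative microscoping'' argument: check the congruence at $a=q^{\pm n}$ directly from the terminating summation, and prove the $\Phi_n(q)$ part at $a=1$ by pairing terms $k$ and $n-1-k$ or similar reflections.
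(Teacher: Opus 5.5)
Your reduction to $(a\frac{d}{da})^2F(a)|_{a=1}$ is fine; it equals $-2(1-q)^{-2}$ times the left side of \eqref{th-1}. The step that is supposed to produce the modulus $\Phi_n(q)^2$, however, is false.

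Put $P(a)=(1-aq^n)(a-q^n)$. Then $P(1)=P'(1)=(1-q^n)^2$, but $P''(1)=-2q^n$, and $(a\frac{d}{da})^2P|_{a=1}=(1-q^n)^2-2q^n$. So the error term $\Phi_n(q)P(a)H(a)$ contributes $-2q^n\Phi_n(q)H(1)$, which vanishes only modulo $\Phi_n(q)$.

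The statement itself shows this step cannot be rescued. Write $F_M$ for your $F$ with upper limit $M$, and take $n>5$. Every tail term $(3n+1)/4<k\le n-1$ of $F_{n-1}(a)$ contains $(1-aq^n)(1-q^n/a)$, the factor $1-q^n$ from $(q;q^2)_k$, and $1-q^{3n}$ from $(q^{-1};q^4)_k$. The only denominator factor divisible by $\Phi_n(q)$ is $1-q^{2n}$ in $(q^4;q^2)_{n-1}$. Hence $F_{n-1}(a)-F_{(3n+1)/4}(a)\equiv 0 \pmod{\Phi_n(q)(1-aq^n)(a-q^n)}$, and your scheme would return the same value for both truncations. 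But the two cases of \eqref{th-1} differ by $-\frac{3q(1+q)[n]}{2}$. That difference comes precisely from $P''(1)=-2q^n$ acting on the $k=n-1$ term, which is where the summand $q^n/[n]^2$ at $i=(n+1)/2$ enters.

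Separately, $(q^{-1};q^4)_k$ never vanishes. So for generic $a$ your sum with $M=(3n+1)/4$ does not terminate, and there is no exact closed form to differentiate. The quadratic summation terminates only when $q^{-1}$ is replaced by $q^{-1-3n}$.

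The missing idea is a second parameter. The paper replaces $(q^{-1};q^4)_k/(q^4;q^2)_k$ by $(q^{-1}/b^3;q^4)_k\,b^{3k}/(q^4b^3;q^2)_k$.
\begin{itemize}
\item At $b=q^n$ the series terminates at $k=(3n+1)/4$ and equals the product in \eqref{G-1} identically in $a$. Differentiating twice in $a$ is therefore legitimate. It gives the $b$-double sum modulo $b-q^n$ as $-\frac{(q^3;q^2)_{(3n+1)/2}}{(q^4;q^4)_{(3n+1)/4}^2}\sum_i\frac{q^{4i}}{[4i]^2}$, which is divisible by $\Phi_n(q)^2$.
\item Modulo $\Phi_n(q)$, the $b$-double sum is obtained without differentiating. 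One compares the single sum at $a=1$, known modulo $\Phi_n(q)^3$ from the Guo--Zhao congruence modulo $\Phi_n(q)(1-aq^n)(a-q^n)$, with the exact evaluation \eqref{G-1-2} at $a=q^n$. By \eqref{0-2-3} the difference is $q^{-n}[n]^2$ times the double sum.
\item Gluing modulo $\Phi_n(q)(b-q^n)$ and letting $b\to 1$ turns $b-q^n$ into the second factor $\Phi_n(q)$.
\item The case $M=n-1$ is then handled by inspecting the tail directly. Terms with $(3n+1)/4<k\le n-2$ vanish modulo $\Phi_n(q)^2$. The $k=n-1$ term is evaluated with Guo's congruence for that term, multiplied by $q^n/[n]^2$.
\end{itemize}
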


Letting $n=p$ be a prime and taking the limit as $q\rightarrow 1$ in Theorem \ref{thm-1}, we obtain the following congruence.
\begin{cor}
	Let $p$ be a  prime with $p\equiv 1 \pmod{4}$. Then modulo $p^2$,
	\begin{align*}
	\sum_{k=0}^{N}(6k+1)&\frac{\left(\frac{1}{2}\right)_k^3\left(-\frac{1}{4}\right)_k}{k!^3\left(k+1\right)!4^k}
	\left(\sum_{i=1}^{k}
	\frac{1}{(2i-1)^2}-\sum_{i=1}^{k}\frac{1}{16i^2}\right) \notag \\
	&\equiv
	\begin{cases}
		0,& \text{if }   \quad    N=(3p+1)/4 \quad and  \quad p>5 ,\\
		-3p,& \text{if }    \quad  N=p-1 \quad  and \quad p\geq 5.
	\end{cases}
\end{align*}	
\end{cor}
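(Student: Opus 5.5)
The plan is to realize the double sum as a derivative, with respect to an auxiliary parameter, of a single parametric $q$-series that can be summed in closed form. I would introduce $a$ and consider
\[
F(a)=\sum_{k=0}^{M}[6k+1]\frac{(aq;q^2)_k(q/a;q^2)_k(q;q^2)_k(q^{-1};q^4)_k}{(aq^4;q^4)_k(q^4/a;q^4)_k(q^4;q^4)_k(q^4;q^2)_k}q^{k^2+3k}.
\]
This is a truncated $q$-analogue of the Guo--Zhao single sum (the $k$-th term with $a=1$ is the summand of \eqref{th-1} without the inner sum). The key computation is the second derivative at $a=1$. Writing $a=e^{t}$, the factor $(aq;q^2)_k(q/a;q^2)_k/(aq^4;q^4)_k(q^4/a;q^4)_k$ is even in $t$. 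Its second logarithmic derivative at $t=0$ is
\[
-\sum_{i=1}^{k}\frac{2q^{2i-1}}{(1-q^{2i-1})^2}+\sum_{i=1}^{k}\frac{2q^{4i}}{(1-q^{4i})^2},
\]
which equals $-\frac{2}{(1-q)^2}$ times the difference of inner sums in \eqref{th-1}, because $[2i-1]^2(1-q)^2=(1-q^{2i-1})^2$ and similarly for $[4i]$. The first derivative vanishes, so
\[
\left.\frac{d^2}{dt^2}F\right|_{t=0}=-\frac{2}{(1-q)^2}\,S_M,
\]
where $S_M$ is the left-hand side of \eqref{th-1}. It therefore suffices to find $F(a)$ modulo $\Phi_n(q)^2$ (in fact modulo $\Phi_n(q)^2$ times a factor like $(1-a)^2$ or $(1-aq^n)(a-q^n)$, using that $(1-q)^2$ is coprime to $\Phi_n(q)$). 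Then I would differentiate twice in $t$ and set $t=0$.

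To evaluate $F(a)$, I would use the creative microscoping approach of Guo and Zudilin. The sum is a terminating very-well-poised type series, presumably of $_8\phi_7$ or $_{10}\phi_9$ shape in base $q^2$/$q^4$ after splitting $(q^{-1};q^4)_k(q^4;q^2)_k^{-1}$ and the quadratic power $q^{k^2+3k}$ (which comes from letting a parameter tend to infinity). I would take the specializations $a=q^{n}$ and $a=q^{-n}$. There $(q^{1-n};q^2)_k$ truncates the series at $k=(n-1)/2$, the sum terminates naturally, and a quadratic or cubic summation closes it: the Rahman/Gasper quadratic summation, or the $_6\phi_5$ summation via the limiting form used by Guo--Zhao. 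This should give $F(q^{\pm n})$ explicitly, hence $F(a)\equiv G(a)\pmod{(1-aq^n)(a-q^n)}$ for an explicit $G$ (likely $0$, or $[n]$ times a simple rational function in $a$).

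Since $n\equiv1\pmod 4$, the denominators $(aq^4;q^4)_k(q^4/a;q^4)_k$ do not vanish modulo $\Phi_n(q)$ for $k\le M$ when $M=(3n+1)/4$, and the terms with $(3n+1)/4<k\le n-1$ contain $(q;q^2)_k(q^{-1};q^4)_k$, which is divisible by $\Phi_n(q)^2$ (the factor $(q^{-1};q^4)_k$ picks up $1-q^{3n}$ once $4k-5\ge 3n$). This shows the two truncations agree up to the boundary terms, whose contribution I would track separately to explain the extra $-\frac{3q(1+q)}{2}[n]$. The congruence modulo $(1-aq^n)(a-q^n)$ at $a=1$ becomes one modulo $\Phi_n(q)^2$. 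Applying $\frac{d^2}{dt^2}$ to both sides of an identity $F(a)-G(a)=(1-aq^n)(a-q^n)H(a)$ gives terms involving $H(1)$ and $H'(1)$. I would then argue these vanish modulo $\Phi_n(q)^2$ after multiplying by the appropriate factors, mirroring Tang's and Gu--Guo's treatment.

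The main obstacle is the second-derivative step. The factor $(1-aq^n)(a-q^n)$ only vanishes to first order modulo $\Phi_n$ at $a=1$. Its second derivative $-2a q^n+\dots$ is a unit, so I would need $H(1)\equiv0\pmod{\Phi_n(q)^2}$, which is not automatic. I would first try to strengthen the parametric congruence to modulo $\Phi_n(q)(1-aq^n)(a-q^n)$, by additionally proving $F(1)\equiv G(1)\pmod{\Phi_n(q)^3}$ via a Chinese-remainder style argument. Failing that, I would directly show, by pairing terms $k$ and $(n-1)/2-k$ (or $n-1-k$) at roots of unity, that $H(1)$ vanishes modulo $\Phi_n(q)$. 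The differing boundary contributions between $M=(3n+1)/4$ and $M=n-1$, which produce $-\frac{3q(1+q)[n]}{2}$, would be computed directly from the few terms where $(q^{-1};q^4)_k$ contributes a single factor of $\Phi_n(q)$.
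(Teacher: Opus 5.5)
The gap is at exactly the step you flag, and neither of your remedies closes it: each falls one power of $\Phi_n(q)$ short. As written, the proposal proves the double sum only modulo $\Phi_n(q)$.

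Concretely, $G\equiv 0$ here. By \eqref{G-1-2} with $b=1$, the factor $(b^{-3};q^4)_{(n-1)/4}$ vanishes, so $F(q^{\pm n})=0$. Hence $F(a)=P(a)H(a)$ with $P(a)=(1-aq^n)(a-q^n)$, and $H$ has the same denominators as $F$, which are prime to $\Phi_n(q)$ at $a=1$. We have $F(a)=F(1/a)$, $P(1)=P'(1)=(1-q^n)^2$ and $P''(1)=-2q^n$. Your derivative computation therefore gives exactly
\[
S_M\equiv \frac{q^nF(1)}{[n]^2}\pmod{\Phi_n(q)^2}.
\]
So the case $M=(3n+1)/4$ is equivalent to the single-sum supercongruence $F(1)\equiv 0\pmod{\Phi_n(q)^4}$. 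Each of your three suggestions yields only $H(1)\equiv 0\pmod{\Phi_n(q)}$:
\begin{itemize}
\item proving $F(1)\equiv G(1)\pmod{\Phi_n(q)^3}$;
\item lifting the modulus to $\Phi_n(q)(1-aq^n)(a-q^n)$;
\item pairing terms at roots of unity to get $H(1)\equiv0\pmod{\Phi_n(q)}$.
\end{itemize}
That gives $S_M\equiv 0$ only modulo $\Phi_n(q)$. A modulus-$\Phi_n(q)^3$ statement for the single sum is exactly what the available parametric congruence \eqref{0-2-0} supplies. Unless you import a genuinely stronger single-sum congruence modulo $\Phi_n(q)^4$, the one-parameter route stalls here.

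The paper gets the second power from a second parameter, replacing $(q^{-1};q^4)_k/(q^4;q^2)_k$ by $(q^{-1}/b^3;q^4)_k\, b^{3k}/(q^4b^3;q^2)_k$.
\begin{itemize}
\item \emph{Modulo $b-q^n$.} At $b=q^n$ the $a$-series \eqref{G-1} is an exact identity in $a$, so differentiating twice at $a=1$ is legitimate. This gives the double sum exactly modulo $b-q^n$, with a right-hand side containing $(1-q^n)(1-q^{3n})$.
\item \emph{Modulo $\Phi_n(q)$.} No derivative is taken. The $\Phi_n(q)^3$ single-sum congruence at $a=1$ is compared with the exact sum \eqref{G-1-2} at $a=q^n$. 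By \eqref{0-2-3} the difference is $q^{-n}[n]^2$ times the double sum, so the double sum vanishes modulo $\Phi_n(q)$.
\item \emph{Gluing.} Combining the coprime moduli $\Phi_n(q)$ and $b-q^n$ and letting $b\to 1$ turns $\Phi_n(q)(b-q^n)$ into $\Phi_n(q)^2$.
\end{itemize}

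Two smaller corrections for the case $M=n-1$.
\begin{itemize}
\item The extra $-\frac{3q(1+q)}{2}[n]$ comes solely from $k=n-1$, because the denominator $(q^4;q^2)_{n-1}$ contains $1-q^{2n}$. The terms with $(3n+1)/4<k\le n-2$ also pick up $1-q^{3n}$ from $(q^{-1};q^4)_k$, but they vanish modulo $\Phi_n(q)^2$. Since the inner sum contributes $q^n/[n]^2$, evaluating the $k=n-1$ term needs that single term modulo $\Phi_n(q)^4$ (Guo's evaluation).
\item For $n=5$ one has $(3n+1)/4=n-1$, so that case must be checked directly.
\end{itemize}
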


\begin{thm}\label{thm-2}
	For any odd positive integer $n$ with $n\equiv 3 \pmod{4}$, we have
	\begin{align} \label{th-2}
			\sum_{k=0}^{(n-1)/2}&[6k+1]\frac{(q;q^2)_k^3(q^{-1};q^4)_k}{(q^4;q^4)_k^3(q^{4};q^2)_k}q^{k^2+3k}\left(\sum_{j=1}^{k}\frac{q^{2j-1}}{[2j-1]^2}-\sum_{j=1}^{k}\frac{q^{4j}}{[4j]^2}\right) \notag \\
			&\equiv -\frac{(q^3,q^5;q^4)_{(n+1)/4}}{(q^4;q^4)_{(n+1)/4}^2}\sum_{j=1}^{(n+1)/4}\frac{q^{4j}}{[4j]^2}  \pmod{\Phi_n(q)^2}.
	\end{align}	
\end{thm}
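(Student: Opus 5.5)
We will prove Theorem \ref{thm-2} by differentiating a parametric $q$-identity, in the spirit of Gu--Guo \cite{B-7} and Tang \cite{B-8}. The inner double sum arises as the logarithmic derivative of a ratio of $q$-shifted factorials. Indeed, for
\[
F_k(a)=\frac{(q/a,aq;q^2)_k}{(q^4/a,aq^4;q^4)_k},
\]
one has $F_k(1)=(q;q^2)_k^2/(q^4;q^4)_k^2$. Moreover, $\frac{d^2}{da^2}F_k(a)\big|_{a=1}$, or equivalently the second derivative in the variable $a$ after the symmetrization $a\leftrightarrow a^{-1}$, produces the factor
\[
\sum_{i=1}^{k}\frac{q^{2i-1}}{[2i-1]^2}-\sum_{i=1}^{k}\frac{q^{4i}}{[4i]^2},
\]
up to a constant multiple of $(1-q)^{-2}$. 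The first derivative vanishes at $a=1$ by symmetry.

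\emph{Step 1: the parametric identity.} Consider the truncated sum
\[
S(a)=\sum_{k=0}^{(n-1)/2}[6k+1]\frac{(q;q^2)_k(q/a,aq;q^2)_k(q^{-1};q^4)_k}{(q^4;q^4)_k(q^4/a,aq^4;q^4)_k(q^4;q^2)_k}q^{k^2+3k}.
\]
We will evaluate it, or evaluate it modulo $\Phi_n(q)^2$ (more precisely modulo $(1-aq^n)(a-q^n)$), via a terminating summation. The natural candidate is the terminating $q$-analogue of Whipple's or Rahman's quadratic ${}_4\phi_3$-type summation used by Guo--Zhao for the $a$-free version: the factor $q^{k^2+3k}$ together with $(q^{-1};q^4)_k/(q^4;q^2)_k$ signals a quadratic (base $q^2$ versus $q^4$) transformation. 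For $n\equiv 3\pmod 4$, the specialization $a=q^{\pm n}$ makes $(q/a;q^2)_k$ terminate at $k=(n-1)/2$. We expect the closed form to be a product of the form
\[
c(a)\,\frac{(aq^3,q^5/a;q^4)_{(n+1)/4}}{(aq^4,q^4/a;q^4)_{(n+1)/4}}
\]
or close to it, with $c(a)$ a simple factor equal to $-1$ at $a=1$. This matches the right-hand side of \eqref{th-2}: at $a=1$ it gives the $a$-free Guo--Zhao-type value. Taking $a=q^{-n}$ and $a=q^n$ and using the Chinese remainder theorem (the polynomials $1-aq^n$ and $a-q^n$ are coprime) gives the congruence for $S(a)$ modulo $(1-aq^n)(a-q^n)$, valid as an identity in rational functions of $a$.

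\emph{Step 2: differentiate.} We apply $\frac{d^2}{da^2}$ at $a=1$, or better the operator $D=\left(a\frac{d}{da}\right)^2$, which handles the $a\leftrightarrow a^{-1}$ symmetry cleanly. On the left, since $a\frac{d}{da}\log F_k$ vanishes at $a=1$, we get $D F_k|_{a=1}=F_k(1)\,\frac{d^2}{da^2}\log F_k|_{a=1}$ type terms. These combine to $-2q\cdot(\text{double sum})$ after simplifying $\sum q^{2i-1}/(1-q^{2i-1})^2$ into $q$-integers, modulo a normalization constant. On the right, the derivative of the product gives
\[
\frac{(q^3,q^5;q^4)_{(n+1)/4}}{(q^4;q^4)_{(n+1)/4}^2}\sum_{j}\left(\ldots\right).
\]
Here we must show that the contribution of $(aq^3,q^5/a;q^4)$ combines with that of $(aq^4,q^4/a;q^4)$, modulo $\Phi_n(q)$, into the single sum $\sum_{j}q^{4j}/[4j]^2$. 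This uses pairing $j\leftrightarrow (n+1)/4-j$ and $q^n\equiv1$.

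\emph{Step 3: differentiating the modulus.} The modulus $(1-aq^n)(a-q^n)$ has second derivative in $a$ equal to $-2q^n$ at $a=1$, while its value at $a=1$ is $(1-q^n)^2$. So the differentiated congruence holds modulo $\Phi_n(q)^2$ only after we control the derivative of the quotient. The standard trick is to note that both sides at $a=1$ agree modulo $\Phi_n(q)^2$, which is the $a$-free result of Guo--Zhao, and that their first derivatives vanish. One then needs the difference $S(a)-\text{RHS}(a)=(1-aq^n)(a-q^n)G(a)$ with $G(a)$ having no pole at $q=\zeta$; we verify this from denominators of the form $(aq^4,q^4/a;q^4)_k$ with $4k<n$ or indices not hitting $\Phi_n$. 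This step is the main obstacle. Differentiating introduces the term $-2q^nG(1)$, which must be shown to be $\equiv 0 \pmod{\Phi_n(q)^2}$, or absorbed. If this fails, we instead work directly modulo $\Phi_n(q)^2$ by writing each term's derivative and using $[n]$-divisibility of $[6k+1]$-weighted symmetric pairs $k\leftrightarrow (n-1)/2-k$.
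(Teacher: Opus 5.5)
There is a genuine gap at the point you flagged in Step 3, and it cannot be closed with $a$ as the only parameter. Having set $b=1$, your $S(a)$ is a true truncation for generic $a$: nothing vanishes beyond $k=(n-1)/2$. So it has no closed form in $a$. All you know is $S(q^{\pm n})=V$, where $V$ is the right-hand side of \eqref{G-2-1} at $b=1$, and $V$ does not depend on $a$. Your guessed product $c(a)(aq^3,q^5/a;q^4)_{(n+1)/4}/(aq^4,q^4/a;q^4)_{(n+1)/4}$ is not an identity for this sum. Step 2, including the pairing $j\leftrightarrow (n+1)/4-j$, therefore has nothing to act on.

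A congruence modulo $(1-aq^n)(a-q^n)$ does not determine $S''(1)$ modulo $\Phi_n(q)^2$. Write $S(a)-V=(1-aq^n)(a-q^n)G(a)$. Even granting your regularity claim for $G$, you only get $S''(1)\equiv -2q^nG(1)\pmod{\Phi_n(q)^2}$, with $G(1)=(S(1)-V)/(1-q^n)^2$.

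This term cannot be shown to vanish. Indeed, $S''(1)=-2(1-q)^{-2}$ times the left-hand side of \eqref{th-2}, and that side is not $0$ modulo $\Phi_n(q)^2$ in general. For $n=3$ the right-hand side is $-(1-q^3)(1-q^5)q^4/\bigl((1-q^4)^2[4]^2\bigr)$, which is not divisible by $\Phi_3(q)^2$. So $-2q^nG(1)$ \emph{is} the answer. Computing it requires $S(1)$ modulo $\Phi_n(q)^4$, whereas the Guo--Zhao congruence at $a=b=1$ only gives $\Phi_n(q)^3$. Your fallback via pairs $k\leftrightarrow (n-1)/2-k$ is not an argument.

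The missing idea is a second microscoping parameter. Replace $(q^{-1};q^4)_k/(q^4;q^2)_k$ by $(q^{-1}/b;q^4)_k\,b^k/(q^4b;q^2)_k$. At $b=q^n$ the factor $(q^{-1-n};q^4)_k$ makes the sum terminate for \emph{every} $a$. Then \eqref{G-2} evaluates it exactly as $(q^3,q^5;q^4)_{(n+1)/4}/(aq^4,q^4/a;q^4)_{(n+1)/4}$. Because this is an identity in $a$, differentiating twice at $a=1$ is legitimate. The right-hand side of \eqref{th-2} then falls out directly, with no pairing, since the $a$-dependence sits only in the denominator. This gives \eqref{0-4}, i.e.\ the result modulo $b-q^n$.

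Separately, the $b$-deformed double sum is $\equiv 0\pmod{\Phi_n(q)}$. The paper proves this by comparing the $a=1$ and $a=q^n$ specializations, which agree modulo $\Phi_n(q)^3$ by Guo--Zhao's Theorem 4.1 together with \eqref{G-2-1}. It then expands $(q^{4+n},q^{4-n};q^4)_k$ and $(q^{1+n},q^{1-n};q^2)_k$ modulo $\Phi_n(q)^4$ to extract $[n]^2$ times the double sum. Since the target contains the factor $1-q^n$, the two congruences glue modulo $\Phi_n(q)(b-q^n)$. Letting $b\to 1$ turns $b-q^n$ into $1-q^n$, which yields the modulus $\Phi_n(q)^2$.
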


Letting $n=p$ be a prime and taking the limit as $q\rightarrow 1$ in Theorem \ref{thm-2}, we obtain the following consequence.
\begin{cor}
	For any prime $p$ with $p\equiv 3 \pmod{4}$, we have
\[
			\sum_{k=0}^{(p-1)/2}(6k+1)\frac{\left(\frac{1}{2}\right)_k^3\left(-\frac{1}{4}\right)_k}{k!^3\left(k+1\right)!4^k}
			\sum_{i=1}^{k}\left(\frac{1}{(2i-1)^2}-\frac{1}{16i^2}
			\right)
			\equiv -\frac{\left(\frac{3}{2}\right)_{(p+1)/2}}{2^{(p+9)/2}\left(1\right)_{(p+1)/4}^2}
			\sum_{j=1}^{(p+1)/4}\frac{1}{j^2}  \pmod{p^2}. 
\]	
\end{cor}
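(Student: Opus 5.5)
The plan is the standard \emph{differential operator} (parameter derivative) method. We look for a terminating $q$-series identity with a free parameter $a$ whose summand specializes at $a=1$ to the single-sum summand
\[
[6k+1]\frac{(q;q^2)_k^3(q^{-1};q^4)_k}{(q^4;q^4)_k^3(q^4;q^2)_k}q^{k^2+3k}.
\]
Differentiating in $a$ at $a=1$ should then produce exactly the inner double-sum factor $\sum_{j\le k}\bigl(q^{2j-1}/[2j-1]^2-q^{4j}/[4j]^2\bigr)$. The natural candidate is the Gu--Guo type parametrization: replace $(q;q^2)_k^2$ by $(aq,q/a;q^2)_k$ and $(q^4;q^4)_k^2$ by $(aq^4,q^4/a;q^4)_k$. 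Since
\[
\frac{d^2}{da^2}\Big|_{a=1}\log\frac{(aq,q/a;q^2)_k}{(aq^4,q^4/a;q^4)_k}
\]
is a constant multiple of $-\sum_{j}\bigl(q^{2j-1}/(1-q^{2j-1})^2-q^{4j}/(1-q^{4j})^2\bigr)$, and the first derivative vanishes by the $a\leftrightarrow 1/a$ symmetry, the double sum equals (up to a factor $(1-q)^2$ times a constant) the second $a$-derivative at $a=1$ of the parametrized sum $S(a)=\sum_{k=0}^{(n-1)/2}F_k(a)$.

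The key step is to evaluate $S(a)$ in closed form. We would derive this from Watson's $_8\phi_7$ transformation, or from the $_6\phi_5$ summation (the $[6k+1]$ factor with base $q^2$ and the $q^4$-factorials point to a quadratic summation of Rahman/Gasper type). We take $n\equiv 3\pmod 4$ and insert $q^{-n}$-type terminating parameters, in the spirit of Guo--Zhao's treatment of the single sum, where the answer is $-(q^3,q^5;q^4)_{(n+1)/4}/(q^4;q^4)^2_{(n+1)/4}$ up to $\Phi_n(q)^2$. This should give a congruence
\[
S(a)\equiv G(a)=-\frac{(q^3,q^5;q^4)_{(n+1)/4}}{(aq^4,q^4/a;q^4)_{(n+1)/4}}\pmod{\Phi_n(q)^2},
\]
valid for $a$ in a neighbourhood of $1$. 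To justify it we would prove $S(a)=G(a)$ exactly for $a=q^{\pm n}$ (or show the terms vanish mod $\Phi_n$ at those values) and then use the Chinese-remainder/"creative microscoping" argument with the coprime factors $(1-aq^n)(a-q^n)$ and $\Phi_n(q)$. The truncation at $(n-1)/2$ is harmless because $(q;q^2)_k\equiv0 \pmod{\Phi_n}$ for $(n-1)/2<k\le n-1$, and the $a$-parametrized numerator keeps this vanishing to order $2$ near $a=1$.

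Next, differentiate twice in $a$ at $a=1$. This is legitimate because the congruence holds as a congruence of rational functions in $a$ whose denominators are coprime to $\Phi_n(q)$. On the left we get $(F_k(1)\times(-c)\sum(\cdots))$ summed over $k$. On the right,
\[
\frac{d^2}{da^2}\Big|_{a=1}\frac{1}{(aq^4,q^4/a;q^4)_m}=-\frac{c'}{(q^4;q^4)_m^2}\sum_{j=1}^m\frac{q^{4j}}{(1-q^{4j})^2},
\]
which after normalization gives exactly the stated right-hand side $\sum_{j=1}^{(n+1)/4}q^{4j}/[4j]^2$ with $m=(n+1)/4$. Matching constants then finishes the proof.

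The main obstacle is the first step: finding the precise parametric identity or congruence. In particular, we must check that the $a$-deformation keeps both the $(q^{-1};q^4)_k/(q^4;q^2)_k$ factor and the weight $q^{k^2+3k}$ compatible with a summable very-well-poised series. We would first try a limiting case of Watson's transformation with $e\to\infty$ producing $q^{k^2}$, and verify numerically for $n=3,7,11$ before attempting the microscoping.
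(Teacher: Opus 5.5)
There is a genuine gap. Your architecture is the paper's: the deformation $(aq,q/a;q^2)_k/(aq^4,q^4/a;q^4)_k$, a vanishing first derivative by $a\leftrightarrow 1/a$, and two derivatives at $a=1$. But the step that carries the whole proof is not delivered by your sketch, and parts of the sketch are false. That step is a congruence $S(a)\equiv G(a)\pmod{\Phi_n(q)^2}$ holding identically in $a$.

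\emph{Wrong exact value.} The evaluation at $a=q^{\pm n}$ does not give $G(q^{\pm n})$. By the case $b=1$ of \eqref{G-2-1}, with $m=(n+1)/4$,
\[
S(q^{\pm n})=\frac{(q^3,q^{-2};q^4)_m\,q^m}{(q^4,q^{-1};q^4)_m},
\]
which agrees with $\frac{(q^3,q^5;q^4)_m}{(q^{4+n},q^{4-n};q^4)_m}$ only modulo $\Phi_n(q)^2$. For $n=3$ the two are $\frac{1+q+q^2}{1+q^2}$ and $\frac{(1-q^3)(1-q^5)}{(1-q)(1-q^7)}$.

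\emph{Wrong sign.} The single sum is $\equiv +(q^3,q^5;q^4)_m/(q^4;q^4)_m^2$. The minus sign in the final answer is created by the differentiation, because $\frac{d^2}{da^2}\big|_{a=1}$ acts with opposite signs on the numerator pair and the denominator pair.

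\emph{The modulus does not survive differentiation.} This is the decisive problem. Microscoping in $a$ alone gives at best $S(a)\equiv T(a)\pmod{\Phi_n(q)(1-aq^n)(a-q^n)}$. Write $S-T=\Phi_n(q)P(a)R(a)$ with $P(a)=(1-aq^n)(a-q^n)$. Then $P(1)=P'(1)=(1-q^n)^2$, but $P''(1)=-2q^n$. Hence $S''(1)-T''(1)\equiv -2q^n\Phi_n(q)R(1)\pmod{\Phi_n(q)^3}$, so only a congruence modulo $\Phi_n(q)$ survives. The factor $(1-aq^n)(a-q^n)$ becomes $\Phi_n(q)^2$ only after setting $a=1$, not as a function of $a$, so it cannot justify differentiating.

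The missing idea is a second parameter whose microscoping factor does not involve $a$. The paper replaces $(q^{-1};q^4)_k/(q^4;q^2)_k$ by $(q^{-1}/b;q^4)_k\,b^k/(bq^4;q^2)_k$ and proceeds as follows.

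\emph{Exact identity at $b=q^n$.} There the sum equals $\frac{(q^3,q^5;q^4)_m}{(aq^4,q^4/a;q^4)_m}$ exactly for every $a$, by \eqref{G-2}. So it can be differentiated twice in $a$ without loss, giving the double sum modulo $b-q^n$.

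\emph{Vanishing modulo $\Phi_n(q)$.} Compare Guo--Zhao's congruence at $a=1$ (valid modulo $\Phi_n(q)^3$) with the exact $a=q^n$ identity \eqref{G-2-1}. Expanding $(q^{1\pm n};q^2)_k$ and $(q^{4\pm n};q^4)_k$ to second order in $1-q^n$ shows the double sum is $\equiv 0\pmod{\Phi_n(q)}$ for all $b$.

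\emph{Combination.} The right-hand side contains $1-q^n$ from $(q^3;q^4)_m$. The Chinese remainder theorem then gives the result modulo $\Phi_n(q)(b-q^n)$, and $b\to 1$ yields $\Phi_n(q)^2$.

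You can keep your order (congruence first, derivatives second) and repair it the same way. Combine \eqref{G-2} with the fact that the two-parameter sum is $\equiv 0\pmod{\Phi_n(q)}$ for all $a,b$, which is part of Guo--Zhao's Theorem 4.1. This gives the sum $\equiv \frac{(q^3,q^5;q^4)_m}{(aq^4,q^4/a;q^4)_m}\pmod{\Phi_n(q)(b-q^n)}$ uniformly in $a$. Setting $b=1$ gives your target congruence with the $+$ sign, valid as rational functions of $a$, and your differentiation step then goes through.

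Finally, the statement is the $p$-adic corollary, and your plan never passes to it. You must put $n=p$, let $q\to 1$ using $\Phi_p(1)=p$, and check that $(q^3,q^5;q^4)_{(p+1)/4}(q^4;q^4)_{(p+1)/4}^{-2}\sum_j q^{4j}/[4j]^2$ tends to $(\tfrac32)_{(p+1)/2}\sum_j j^{-2}/(2^{(p+9)/2}(1)_{(p+1)/4}^2)$. This is routine.
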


The rest of the paper is organized as follows. In Section \ref{main-2}, we present  preliminary results. The proofs of Theorem \ref{thm-1} and Theorem \ref{thm-2} are given in Section \ref{main-3}.

\section{Preliminary results}\label{main-2}
In  2022, by using summation of Gasper and Rahman \cite[(3.8.12)]{W-1}, Wei \cite[(2.2)]{W} established an identity
\[
	\sum_{k=0}^{\infty}\frac{1-aq^{3k}}{1-a}
	\frac{(a,b,q/b;q)_k\,(f;q^2)_k\,q^{(k^2+k)/2}}
	{(q^2,aq^2/b,abq;q^2)_k\,(aq/f;q)_k}
	\left(\frac{a}{f}\right)^k 
	=\frac{(aq,aq^2,aq^2/bf,abq/f;q^2)_\infty}
	{(aq/f,aq^2/f,aq^2/b,abq;q^2)_\infty}.
\]
 Applying the above identity, Guo and Zhao \cite{G-Z} presented the following results.

\begin{lem}		
	Let $n>1$ be an integer with $n\equiv 1 \pmod{4}$.	Then
	\begin{equation}
			\sum_{k=0}^{(3n+1)/4}[6k+1]\frac{(q,q^{1+n},q^{1-n};q^2)_k(q^{-1}/b^3;q^4)_k}{(q^4,q^{4+n},q^{4-n};q^4)_k(b^3q^{4};q^2)_k}q^{k^2+3k}b^{3k}=\frac{(q^5,b^{-3};q^4)_{(n-1)/4}(b^{3}q)^{(n-1)/4}}{(b^3q^6,q;q^4)_{(n-1)/4}}.  \label{G-1-2}
	\end{equation}	
	\begin{equation}	
			\sum_{k=0}^{(3n+1)/4}[6k+1]\frac{(q,aq,q/a;q^2)_k(q^{-1-3n};q^4)_k}{(q^4,aq^4,q^4/a;q^4)_k(q^{4+3n};q^2)_k}q^{k^2+3k+3nk}=\frac{(q^5,q^3;q^4)_{(3n+1)/4}}{(aq^4,q^4/a;q^4)_{(3n+1)/4}}.  \label{G-1}
	\end{equation}	
	Let $n>1$ be an integer such that $n\equiv 3 \pmod{4}$, 
	\begin{equation}	
			\sum_{k=0}^{(n-1)/2}[6k+1]\frac{(q,aq,q/a;q^2)_k(q^{-1-n};q^4)_k}{(q^4,aq^4,q^4/a;q^4)_k(q^{4+n};q^2)_k}q^{k^2+3k+nk}=\frac{(q^5,q^3;q^4)_{(n+1)/4}}{(aq^4,q^4/a;q^4)_{(n+1)/4}}. \label{G-2}
	\end{equation}	
	\begin{equation}
			\sum_{k=0}^{(n-1)/2}[6k+1]\frac{(q,q^{1+n},q^{1-n};q^2)_k(q^{-1}/b;q^4)_k}{(q^4,q^{4+n},q^{4-n};q^4)_k(q^{4}b;q^2)_k}q^{k^2+3k}b^k=\frac{(q^3,1/bq^{2};q^4)_{(n+1)/4}(bq)^{(n+1)/4}}{(bq^4,q^{-1};q^4)_{(n+1)/4}}. \label{G-2-1}
	\end{equation}	
\end{lem}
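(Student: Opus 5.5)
All four identities should be direct specializations of Wei's summation quoted above, so the plan is to choose parameters and then simplify both sides. First I replace $q$ by $q^2$ in Wei's identity and put $a\mapsto q$. Then
\[
\frac{1-q^{6k+1}}{1-q}=[6k+1],\qquad q^{(k^2+k)}\Bigl(\frac{q}{f}\Bigr)^k
\]
appear. The summand becomes
\[
[6k+1]\frac{(q,b,q^2/b;q^2)_k(f;q^4)_k}{(q^4,q^5/b,bq^3;q^4)_k(q^3/f;q^2)_k}q^{k^2+2k}f^{-k},
\]
and the right-hand side becomes
\[
\frac{(q^3,q^5,q^5/bf,bq^3/f;q^4)_\infty}{(q^3/f,q^5/f,q^5/b,bq^3;q^4)_\infty}.
\]
This reduced form is what I specialize in each case.

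For \eqref{G-1} I take $b=aq$, so that $(b,q^2/b;q^2)_k=(aq,q/a;q^2)_k$ and $(q^5/b,bq^3;q^4)_k=(q^4/a,aq^4;q^4)_k$. I also take $f=q^{-1-3n}$, which gives $q^3/f=q^{4+3n}$ and $q^{2k}f^{-k}=q^{3k+3nk}$. Since $3n+1\equiv 0\pmod 4$, the factor $(q^{-1-3n};q^4)_k$ kills every term with $k>(3n+1)/4$, so the series terminates at the stated upper limit. On the right I pair the infinite products using $m=(3n+1)/4$:
\[
\frac{(q^5;q^4)_\infty}{(q^{6+3n};q^4)_\infty}=(q^5;q^4)_m,\qquad \frac{(q^3;q^4)_\infty}{(q^{4+3n};q^4)_\infty}=(q^3;q^4)_m,
\]
together with $(aq^{5+3n};q^4)_\infty/(aq^4;q^4)_\infty=1/(aq^4;q^4)_m$ and the same relation with $a\to 1/a$. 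This gives exactly the right-hand side of \eqref{G-1}. For \eqref{G-2}, where $n\equiv 3\pmod 4$, the same choice with $f=q^{-1-n}$ and $m=(n+1)/4$ works verbatim.

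For \eqref{G-1-2} and \eqref{G-2-1} I take $b=q^{1+n}$, so that $(b,q^2/b;q^2)_k=(q^{1+n},q^{1-n};q^2)_k$ and the denominators become $(q^{4+n},q^{4-n};q^4)_k$. For \eqref{G-1-2} I put $f=q^{-1}/b'^3$, writing $b'$ for the free parameter of the lemma to avoid a clash with Wei's $b$. Then $q^3/f=b'^3q^4$ and $q^{2k}f^{-k}=q^{3k}b'^{3k}$, which matches the summand. For \eqref{G-2-1} I put $f=q^{-1}/b'$ instead. Here the termination comes from $(q^{1-n};q^2)_k$, which vanishes for $k>(n-1)/2$. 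On the right the factors $(q^{5+n}/b\ldots)$ telescope against $(q^{5-n}\ldots)$ into finite products of length $(n\mp1)/4$. These then have to be rewritten into the stated shape, e.g.\ $(b'^{-3};q^4)_{(n-1)/4}(b'^3q)^{(n-1)/4}$, using the reflection $(x;q^4)_N=(-x)^Nq^{2N(N-1)}(q^{4-4N}/x;q^4)_N$.

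The main obstacle is this last pair, \eqref{G-1-2} and \eqref{G-2-1}. There, with $b=q^{1+n}$, Wei's right-hand side contains factors such as $(q^{5-n};q^4)_\infty$ in the denominator, which vanish or are singular at some index. In addition, $(q^{4-n};q^4)_k$ in the summand may vanish for $k$ beyond $(n-4)/4$, so the specialization is not literally legitimate. I would handle this by proving the identity for generic $b$ with the sum cut off by terminating in $f$ (or $b'$). Both sides are then rational in $b$, and I would pass to the limit $b\to q^{1+n}$, checking that the apparent poles cancel, as in Guo and Zhao's argument. After that, what remains is the bookkeeping of the powers of $q$ and $b'$ produced by the reflection formula.
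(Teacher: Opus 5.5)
Your specializations of Wei's identity are correct: base $q^2$ with $a=q$; then $b=aq$ with $f=q^{-1-3n}$ or $q^{-1-n}$, respectively $b=q^{1+n}$ with $f=q^{-1}/b'^3$ or $q^{-1}/b'$, followed by the reflection formula. This is the same route as the paper, which simply cites these identities from Guo and Zhao, who obtained them from Wei's identity. The obstacle you flag is illusory, though: for odd $n$ the exponent $4i-n$ is odd, so neither $(q^{4-n};q^4)_k$ nor $(q^5/b;q^4)_\infty=(q^{4-n};q^4)_\infty$ ever vanishes, hence $b=q^{1+n}$ can be substituted directly and no limiting argument is needed (the series then stops at $k=(n-1)/2$ through $(q^{1-n};q^2)_k$, and the extra terms up to the stated upper limits are zero).
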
		
\pf \eqref{G-1-2} and \eqref{G-1} follow from \cite[(2.7) and (2.9)]{G-Z}, respectively. \eqref{G-2} and \eqref{G-2-1} appear in \cite[Page 10]{G-Z}.

\section{Proofs of the Theorems  }\label{main-3}
Given a multivariate function $f(x_1,x_2,\cdots, x_t)$, we define the partial  derivative operator $\mathcal{L}_{x_i}$ as
\[
	\mathcal{L}_{x_i}f(x_1,x_2,\cdots, x_t):=\frac{d}{dx_i}f(x_1,x_2,\cdots, x_t), \quad  \quad  1\leq i\leq t.
\]

We now present a parametric extension of Theorem \ref{thm-1} and Theorem \ref{thm-2}.
\begin{thm}
	(I). Let $n>5$ be a positive integer with $n\equiv 1 \pmod{4}$. Then modulo $\Phi_n(q)(b-q^n)$,
	\begin{equation}
			\sum_{k=0}^{(3n+1)/4}[6k+1]\frac{(q;q^2)_k^3(q^{-1}/b^3;q^4)_k}{(q^4;q^4)_k^3(q^{4}b^3;q^2)_k}q^{k^2+3k}b^{3k}
			\left(\sum_{i=1}^{k}
			\frac{q^{2i-1}}{[2i-1]^2}-\sum_{i=1}^{k}\frac{q^{4i}}{[4i]^2}\right)\equiv 0. \label{tl-1}
	\end{equation}	
	(II). Let $n$ be a positive integer such that $n\equiv 3 \pmod{4}$. Then modulo $\Phi_n(q)(b-q^n)$,
	\begin{align}
			\sum_{k=0}^{(n-1)/2}&[6k+1]\frac{(q;q^2)_k^3(q^{-1}/b;q^4)_k}{(q^4;q^4)_k^3(q^{4}b;q^2)_k}q^{k^2+3k}b^{k}\sum_{j=1}^{k}
			\left(\frac{q^{2j-1}}{[2j-1]^2}-\frac{q^{4j}}{[4j]^2}\right) \notag\\
			& \equiv -\frac{(q^3,q^5;q^4)_{(n+1)/4}}{(q^4;q^4)_{(n+1)/4}^2}\sum_{j=1}^{(n+1)/4}\frac{q^{4j}}{[4j]^2}. \label{tl-2}
	\end{align}
\end{thm}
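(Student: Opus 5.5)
Since $\Phi_n(q)$ and $b-q^n$ are relatively prime polynomials in $q$ and $b$, it suffices to prove each congruence modulo $\Phi_n(q)$ and modulo $b-q^n$ separately, and then combine them.

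\textbf{The main idea: differentiate the lemma in $a$.} The double-sum factor comes from \eqref{G-1} and \eqref{G-2}. Set $a=e^{t}$ and differentiate twice at $t=0$, i.e.\ apply $\mathcal{L}_a$ twice at $a=1$. The pair
\[
(aq,q/a;q^2)_k/(aq^4,q^4/a;q^4)_k
\]
is symmetric under $a\leftrightarrow 1/a$, so the first derivative at $a=1$ vanishes. A standard computation gives
\[
\mathcal{L}_a^2\Big|_{a=1}(aq;q^2)_k(q/a;q^2)_k=-2(q;q^2)_k^2\sum_{i=1}^k\frac{q^{2i-1}}{(1-q^{2i-1})^2}.
\]
The analogous denominator derivative produces $\sum_i q^{4i}/(1-q^{4i})^2$.

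The $q$-integers in the theorem, $[2i-1]^2$ and $[4i]^2$, differ from $(1-q^{2i-1})^2$ and $(1-q^{4i})^2$ by the common factor $(1-q)^2$. Hence $\mathcal{L}_a^2|_{a=1}$ applied to the summand of \eqref{G-1} yields
\[
-2(1-q)^{-2}\times(\text{summand at }a=1)\times\left(\sum_{i=1}^{k}\frac{q^{2i-1}}{[2i-1]^2}-\sum_{i=1}^{k}\frac{q^{4i}}{[4i]^2}\right).
\]

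\textbf{Case (I), modulo $b-q^n$.} Here $b^3=q^{3n}$, so the left side of \eqref{tl-1} becomes $-\tfrac12(1-q)^2\,\mathcal{L}_a^2|_{a=1}$ of the left side of \eqref{G-1}. The right side of \eqref{G-1} is
\[
(q^5,q^3;q^4)_{(3n+1)/4}\big/(aq^4,q^4/a;q^4)_{(3n+1)/4}.
\]
Because $n\equiv1\pmod 4$, the product $(q^3;q^4)_{(3n+1)/4}$ contains the factor $1-q^{n}$. On its own this factor only gives vanishing modulo $\Phi_n(q)$, not modulo $b-q^n$. So I will check that the numerator actually contains $1-q^{n}$ and $1-q^{3n}$ (from $q^5$ and $q^3$ at the appropriate indices) and that the $a$-derivative of the denominator is harmless. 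If the statement requires $0$ exactly modulo $b-q^n$, then $(q^5,q^3;q^4)_{(3n+1)/4}$ must vanish as a polynomial identity after cancelling denominators — which is false. The resolution is that the congruence is in $\mathbb{Z}[q,b]$ localized at denominators coprime to $\Phi_n$. So $q^n\equiv 1$ is only needed modulo $\Phi_n$, and modulo $b-q^n$ we simply need the right side divisible by $\Phi_n(q)$. This holds for the second derivative as well, since $\Phi_n(q)$ divides the $a$-independent numerator, and the denominators $(aq^4,q^4/a;q^4)_{(3n+1)/4}$ at $a=1$ are coprime to $\Phi_n$ for $n>5$, as $4j\le 3n+1<4n$ avoids multiples of $n$ except $j=n/4$, which is not an integer.

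Strictly then, I prove the congruence modulo $\Phi_n(q)$ for both evaluations and combine. Modulo $b-q^n$ we obtain $\Phi_n(q)\cdot(\ldots)$. Modulo $\Phi_n(q)$ we use \eqref{G-1-2}: its summand terms with $k>(n-1)/2$ contain $(q^{1-n};q^2)_k=0$, and for $k\le (n-1)/2$ we have $(q^{1\pm n};q^2)_k\equiv (q;q^2)_k$ and $(q^{4\pm n};q^4)_k\equiv(q^4;q^4)_k$. Moreover, $(q^{1+n};q^2)_k(q^{1-n};q^2)_k$ corresponds to $a=q^n$, so the differentiated version of \eqref{G-1-2} is what matters. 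This is the \emph{main obstacle}: one must compute the derivative structure at $a=q^n$ modulo $\Phi_n$. I would instead use the right side of \eqref{G-1-2}, which has $(b^{-3};q^4)_{(n-1)/4}$, combined with the factor $1-q^n$ hidden in $(q^5;q^4)_{(n-1)/4}$ (index $j=(n-5)/4$, term $1-q^{n}$). This gives vanishing modulo $\Phi_n$ for the parametric sum. The double-sum version modulo $\Phi_n$ follows by replacing $q^{\pm n}$ with $a^{\pm1}$: since $(q^{1+n},q^{1-n};q^2)_k\equiv(aq,q/a;q^2)_k$ at $a=q^n$, the series is a polynomial in $a$ modulo $\Phi_n$, and it suffices to match second-order Taylor data at $a=1$, where $q^n\equiv1$. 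I would verify the second-order vanishing via \eqref{G-1} specialized modulo $\Phi_n$ with $b=q^n\equiv1$.

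\textbf{Case (II).} This is identical in structure, using \eqref{G-2} and \eqref{G-2-1}. Applying $\mathcal{L}_a^2|_{a=1}$ to the right side of \eqref{G-2} gives
\[
-\frac{(q^3,q^5;q^4)_{(n+1)/4}}{(q^4;q^4)^2_{(n+1)/4}}\cdot 2\sum_{j=1}^{(n+1)/4}\frac{q^{4j}}{(1-q^{4j})^2},
\]
and after multiplying by $-\tfrac12(1-q)^2$ this is exactly the stated right side. Finally, the Chinese remainder step combines the two moduli.
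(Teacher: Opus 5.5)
Your modulo $b-q^n$ half is the paper's: apply $\mathcal{L}_a$ twice to \eqref{G-1} and \eqref{G-2} and put $a=1$. There is a sign slip in (II). $\mathcal{L}_a^2$ of $1/(aq^4,q^4/a;q^4)_m$ at $a=1$ equals $+2\sum_{j\le m}q^{4j}/(1-q^{4j})^2$ times its value. Only with the plus sign does multiplication by $-\tfrac12(1-q)^2$ give the stated right-hand side.

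You are right that (I) cannot hold literally. At $b=q^n$ the sum equals the nonzero function $R(q)=-\frac{(q^3;q^2)_{(3n+1)/2}}{(q^4;q^4)_{(3n+1)/4}^2}\sum_{i=1}^{(3n+1)/4}\frac{q^{4i}}{[4i]^2}$, and localizing does not repair this. What can be proved is that the sum is $\equiv R(q)$ modulo $\Phi_n(q)(b-q^n)$, with $R(q)\equiv 0\pmod{\Phi_n(q)^2}$. This is the paper's \eqref{0-2-5}, and it is all Theorem~\ref{thm-1} needs after $b\to 1$.

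The genuine gap is the other half: showing that the double sum $D(q,b)$ vanishes modulo $\Phi_n(q)$ with $b$ \emph{free}. Both (I) and (II) need this, since the right side of \eqref{tl-2} is also $\equiv 0\pmod{\Phi_n(q)}$, and nothing you propose gives it.

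Write $S(a,b)$ for the single sum with $(aq,q/a;q^2)_k/(aq^4,q^4/a;q^4)_k$, and $C(b)$ for the right side of \eqref{G-1-2}, so that $S(q^n,b)=C(b)$; case (II) is the same with \eqref{G-2-1}. Expanding $(1-q^{mi+n})(1-q^{mi-n})=(1-q^{mi})^2-(1-q^n)^2q^{mi-n}$ gives $S(1,b)-C(b)\equiv [n]^2q^{-n}D(q,b)\pmod{\Phi_n(q)^4}$; this is \eqref{0-2-2} with \eqref{0-2-3}. Hence $D\equiv 0\pmod{\Phi_n(q)}$ is equivalent to $S(1,b)\equiv C(b)\pmod{\Phi_n(q)^3}$. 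Your three suggestions fall short of this:
\begin{itemize}
\item That $C(b)$ is divisible by $\Phi_n(q)$ only shows $S(1,b)\equiv 0\pmod{\Phi_n(q)}$.
\item The $a\leftrightarrow 1/a$ symmetry plus \eqref{G-1-2} only gives $S(a,b)\equiv C(b)$ modulo $(1-aq^n)(a-q^n)$. At $a=1$ that is modulo $\Phi_n(q)^2$, which says nothing about $D$.
\item Using \eqref{G-1} ``modulo $\Phi_n$ with $b=q^n\equiv 1$'' controls the sum only modulo the ideal $(\Phi_n(q),b-q^n)$, i.e.\ at a single value of $b$.
\end{itemize}

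The missing ingredient is the extra factor $\Phi_n(q)$ for indeterminate $a$ and $b$: $S(a,b)\equiv C(b)\pmod{\Phi_n(q)(1-aq^n)(a-q^n)}$. The paper imports this from Guo--Zhao, namely their Theorems~2.3 and~4.1, which are congruences modulo $\Phi_n(q)(1-aq^n)(a-q^n)(b-q^n)$. It sets $a=1$ to get \eqref{0-2-0} and \eqref{0-5} modulo $\Phi_n(q)^3$, subtracts the exact identities \eqref{G-1-2} and \eqref{G-2-1}, and cancels $[n]^2$. Without that input, or an independent proof that $S(a,b)\equiv 0\pmod{\Phi_n(q)}$ as a function of $a$, your argument proves neither (I) nor (II).
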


\pf For $n\equiv 1 \pmod{4}$, by applying the operator $\mathcal{L}_{a}$	twice to \eqref{G-1}, we obtain
\begin{align}
		\sum_{k=0}^{(3n+1)/4}&[6k+1]\frac{(q,aq,q/a;q^2)_k(q^{-1-3n};q^4)_k}{(q^4,aq^4,q^4/a;q^4)_k(q^{4+3n};q^2)_k}q^{k^2+3k+3nk}\left(f_k(a)^2+\mathcal{L}_af_k(a)\right)  \notag\\
		&=\frac{(q^3;q^2)_{(3n+1)/2}}{(aq^4,q^4/a;q^4)_{(3n+1)/4}}\left(g_n(a)^2+\mathcal{L}_ag_n(a)\right), \label{0-1}
\end{align}			
where 
\begin{align*}
		&f_k(a)=\sum_{i=1}^{k}\left(\frac{q^{2i-1}}{a(a-q^{2i-1})}-\frac{q^{2i-1}}
		{1-aq^{2i-1}}-\frac{q^{4i}}{a(a-q^{4i})}+\frac{q^{4i}}{1-aq^{4i})}\right),\notag\\
		&g_n(a)=\sum_{i=1}^{(3n+1)/4}\left(\frac{q^{4i}}{1-aq^{4i}}-\frac{q^{4i}}{a(a-q^{4i})}\right).
\end{align*}	
Setting $a=1$ in \eqref{0-1} and simplifying, we deduce that
\begin{align*}	
		\sum_{k=0}^{(3n+1)/4}&[6k+1]\frac{(q;q^2)_k^3(q^{-1-3n};q^4)_k}{(q^4;q^4)_k^3(q^{4+3n};q^2)_k}q^{k^2+3k+3nk}
		\left(\sum_{i=1}^{k}
		\frac{q^{2i-1}}{[2i-1]^2}-\sum_{i=1}^{k}\frac{q^{4i}}{[4i]^2}\right) \notag\\
		&=-\frac{(q^3;q^2)_{(3n+1)/2}}{(q^4;q^4)_{(3n+1)/4}^2}\sum_{i=1}^{(3n+1)/4}\frac{q^{4i}}{[4i]^2}. 
\end{align*}	
This establishes the $q$-congruence: modulo $b-q^n$,
\begin{align}
		\sum_{k=0}^{(3n+1)/4}&[6k+1]\frac{(q;q^2)_k^3(q^{-1}/b^3;q^4)_k}{(q^4;q^4)_k^3(q^{4}b^3;q^2)_k}q^{k^2+3k}b^{3k}
		\left(\sum_{i=1}^{k}
		\frac{q^{2i-1}}{[2i-1]^2}-\sum_{i=1}^{k}\frac{q^{4i}}{[4i]^2}\right) \notag\\
		&\equiv -\frac{(q^3;q^2)_{(3n+1)/2}}{(q^4;q^4)_{(3n+1)/4}^2}\sum_{i=1}^{(3n+1)/4}\frac{q^{4i}}{[4i]^2}.  \label{0-2}
\end{align}	

In view of \cite[Theorem 2.3]{G-Z}: let $n>1$ be an integer with $n\equiv 1 \pmod{4}$. Then modulo $\Phi_n(q)(1-aq^n)(a-q^n)(b-q^n)$,
\begin{align*}
		\sum_{k=0}^{(3n+1)/4}&[6k+1]\frac{(q,aq,q/a;q^2)_k(q^{-1}/b^3;q^4)_k}{(q^4,aq^4,q^4/a;q^4)_k(q^{4}b^3;q^2)_k}q^{k^2+3k}b^{3k} \notag\\
		&\equiv\frac{(b-q^n)(ab-1-a^2+aq^n)}{(a-b)(1-ab)}\frac{(q^5,b^{-3};q^4)_{(n-1)/4}(b^3q)^{(n-1)/4}}{(b^3q^6,q;q^4)_{(n-1)/4}} \notag \\
		&+\frac{(1-aq^n)(a-q^n)}{(a-b)(1-ab)}\frac{(q^3,q^5;q^4)_{(3n+1)/4}}{(aq^4,q^4/a;q^4)_{(3n+1)/4}}. 
\end{align*}	
It is clear that 
\begin{equation}	
		\frac{(b-q^n)(ab-1-a^2+aq^n)}{(a-b)(1-ab)}\equiv 1 \pmod{(1-aq^n)(a-q^n)}. \label{sec-0-2}
\end{equation}	
It follows that for any positive integer $n\equiv 1 \pmod{4}$, we have the following result: modulo $\Phi_n(q)(1-aq^n)(a-q^n)$,
\[
		\sum_{k=0}^{(3n+1)/4}[6k+1]\frac{(q,aq,q/a;q^2)_k(q^{-1}/b^3;q^4)_k}{(q^4,aq^4,q^4/a;q^4)_k(q^{4}b^3;q^2)_k}q^{k^2+3k}b^{3k}
		\equiv\frac{(q^5,b^{-3};q^4)_{(n-1)/4}(b^3q)^{(n-1)/4}}{(b^3q^6,q;q^4)_{(n-1)/4}}. 
\]
Setting $a=1$ in the above $q$-congruence yields
\begin{align}	
		\sum_{k=0}^{(3n+1)/4}&[6k+1]\frac{(q;q^2)_k^3(q^{-1}/b^3;q^4)_k}{(q^4;q^4)_k^3(q^{4}b^3;q^2)_k}q^{k^2+3k}b^{3k} \notag\\
		&\equiv\frac{(q^5,b^{-3};q^4)_{(n-1)/4}(b^3q)^{(n-1)/4}}{(b^3q^6,q;q^4)_{(n-1)/4}} \pmod{\Phi_n(q)^3}.  \label{0-2-0}
\end{align}
Utilizing \eqref{G-1-2}, we obtain
\begin{align}
		\sum_{k=0}^{(3n+1)/4}&[6k+1]\frac{(q;q^2)_k^3(q^{-1}/b^3;q^4)_k}{(q^4;q^4)_k^3(q^{4}b^3;q^2)_k}q^{k^2+3k}b^{3k}-\frac{(q^5,b^{-3};q^4)_{(n-1)/4}(b^3q)^{(n-1)/4}}{(b^3q^6,q;q^4)_{(n-1)/4}} \notag \\
		&=\sum_{k=0}^{(3n+1)/4}[6k+1]\frac{(q;q^2)_k(q^{-1}/b^3;q^4)_k}{(q^4;q^4)_k(q^{4}b^3;q^2)_k}q^{k^2+3k}b^{3k}\mathcal{A}_k(q), \label{0-2-2}
\end{align}
where 
\begin{align*}
	\mathcal{A}_k(q)=\frac{(q;q^2)_k^2(q^{4+n},q^{4-n};q^4)_k-(q^4;q^4)_k^2(q^{1+n},q^{1-n};q^2)_k}{(q^4;q^4)_k^2(q^{4+n},q^{4-n};q^4)_k}	
\end{align*}

For any nonzero integer $r$, we have
\begin{equation}
	1-q^{rn}\equiv 0 \pmod{\Phi_n(q)}, \label{0-2-1}
\end{equation}	
and
\[
	(1-q^{mi+n})(1-q^{mi-n})=(1-q^{mi})^2-(1-q^n)^2q^{mi-n}.
\]
Therefore, we deduce that
\begin{align*}	
		(q^{4+n},q^{4-n};q^4)_k&=\prod_{i=1}^{k}(1-q^{4i+n})(1-q^{4i-n}) \notag\\
		&\equiv (q^4;q^4)_k^2-(q^4;q^4)_k^2\sum_{i=1}^{k}\frac{(1-q^n)^2}{(1-q^{4i})^2}q^{4i-n}\pmod{\Phi_n(q)^4}.
\end{align*}
Similarly, there holds the following consequence:
\[
		(q^{1+n},q^{1-n};q^2)_k\equiv (q;q^2)_k^2-(q;q^2)_k^2\sum_{i=1}^{k}\frac{(1-q^n)^2}{(1-q^{2i-1})^2}q^{2i-1-n}\pmod{\Phi_n(q)^4}.
\]
It thus follows that
\begin{equation}	
		\mathcal{A}_k(q)\equiv [n]^2\frac{(q;q^2)_k^2}{(q^4;q^4)_k^2} 
		\sum_{i=1}^{k} \left(\frac{q^{2i-1-n}}{[2i-1]^2}-\frac{q^{4i-n}}{[4i]^2}\right) \pmod{\Phi_n(q)^4}. \label{0-2-3}
\end{equation}
Substituting \eqref{0-2-0} and \eqref{0-2-3} into \eqref{0-2-2} and simplifying, we obtain
\begin{equation}
		\sum_{k=0}^{(3n+1)/4}[6k+1]\frac{(q;q^2)_k^3(q^{-1}/b^3;q^4)_kq^{k^2+3k}b^{3k}}{(q^4;q^4)_k^3(q^{4}b^3;q^2)_k}\sum_{i=1}^{k}\left(\frac{q^{2i-1}}{[2i-1]^2}-\frac{q^{4i}}{[4i]^2}\right)\equiv 0 \pmod{\Phi_n(q)}. \label{0-2-4}
\end{equation}
Note that  $\Phi_n(q)$ and $(b-q^n)$ are coprime.  With the help of \eqref{0-2} and \eqref{0-2-4}, we derive the desired $q$-congruence, modulo $\Phi_n(q)(b-q^n)$,
\begin{align}
		\sum_{k=0}^{(3n+1)/4}&[6k+1]\frac{(q;q^2)_k^3(q^{-1}/b^3;q^4)_k}{(q^4;q^4)_k^3(q^{4}b^3;q^2)_k}q^{k^2+3k}b^{3k}
		\left(\sum_{i=1}^{k}
		\frac{q^{2i-1}}{[2i-1]^2}-\sum_{i=1}^{k}\frac{q^{4i}}{[4i]^2}\right) \notag \\
		&\equiv -\frac{(q^3;q^2)_{(3n+1)/2}}{(q^4;q^4)_{(3n+1)/4}^2}\sum_{i=1}^{(3n+1)/4}\frac{q^{4i}}{[4i]^2}.  \label{0-2-5}
\end{align}	
Since $(q^3;q^2)_{(3n+1)/2}$ contains the factor $(1-q^n)(1-q^{3n})$, and $(q^4;q^4)_{(3n+1)/4}$ is coprime to $\Phi_n(q)$. Consequently, by means of \eqref{0-2-1}, we conclude that the right-hand side of \eqref{0-2-5} vanishes modulo $\Phi_n(q)^2$, which implies that \eqref{tl-1} holds true modulo $\Phi_n(q)(b-q^n)$.\\

Furthermore, when $n\equiv 3 \pmod{4}$,  applying the operator $\mathcal{L}_a$ twice to \eqref{G-2} yields
\begin{align}
		\sum_{k=0}^{(n-1)/2}&[6k+1]\frac{(q,aq,q/a;q^2)_k(q^{-1-n};q^4)_k}{(q^4,aq^4,q^4/a;q^4)_k(q^{4+n};q^2)_k}q^{k^2+3k+nk}\left(h_k(a)^2+\mathcal{L}_ah_k(a)\right)\notag \\
		&=\frac{(q^5,q^3;q^4)_{(n+1)/4}}{(aq^4,q^4/a;q^4)_{(n+1)/4}}\left(l_n(a)^2+\mathcal{L}_al_n(a)\right), \label{0-3}
\end{align}		
where
\begin{align*}
		&h_k(a)=\sum_{j=1}^{k}\left(\frac{q^{2j-1}}{a(a-q^{2j-1})}-\frac{q^{2j-1}}{1-aq^{2j-1}}+\frac{q^{4j}}{1-aq^{4j}}-\frac{q^{4j}}{a(a-q^{4j})}\right),\notag\\
		&l_n(k)=\sum_{j=1}^{(n+1)/4}\left(\frac{q^{4j}}{1-aq^{4j}}-\frac{q^{4j}}{a(a-q^{4j})}\right).	
\end{align*}		
Putting $a=1$ in \eqref{0-3}, we arrive at
\begin{align*}
		\sum_{k=0}^{(n-1)/2}&[6k+1]\frac{(q;q^2)_k^3(q^{-1-n};q^4)_k}{(q^4;q^4)_k^3(q^{4+n};q^2)_k}q^{k^2+3k+nk}
		\left(\sum_{k=1}^{k}\frac{q^{2j-1}}{[2j-1]^2}-\sum_{k=1}^{k}\frac{q^{4j}}{[4j]^2}\right)\notag \\
		&=-\frac{(q^3,q^5;q^4)_{(n+1)/4}}{(q^4;q^4)_{(n+1)/4}^2}\sum_{k=1}^{(n+1)/4}\frac{q^{4j}}{[4j]^2}. 
\end{align*}	
Hence, we have the following $q$-congruence, modulo $b-q^n$,
\begin{align}	
		\sum_{k=0}^{(n-1)/2}&[6k+1]\frac{(q;q^2)_k^3(q^{-1}/b;q^4)_k}{(q^4;q^4)_k^3(q^{4}b;q^2)_k}q^{k^2+3k}b^k
		\left(\sum_{k=1}^{k}\frac{q^{2j-1}}{[2j-1]^2}-\sum_{k=1}^{k}\frac{q^{4j}}{[4j]^2}\right)\notag \\
		&\equiv -\frac{(q^3,q^5;q^4)_{(n+1)/4}}{(q^4;q^4)_{(n+1)/4}^2}\sum_{j=1}^{(n+1)/4}\frac{q^{4j}}{[4j]^2}. \label{0-4}
\end{align}	

In view of \cite[Theorem 4.1]{G-Z}: let $n$ be a positive integer with $n\equiv 3 \pmod{4}$. Then modulo $\Phi_n(q)(1-aq^n)(a-q^n)(b-q^n)$,
\begin{align*}	
		\sum_{k=0}^{(n-1)/2}&[6k+1]\frac{(q,aq,q/a;q^2)_k(q^{-1}/b;q^4)_k}{(q^4,aq^4,q^4/a;q^4)_k(q^{4}b;q^2)_k}q^{k^2+3k}b^{k} \notag\\
		&\equiv\frac{(b-q^n)(ab-1-a^2+aq^n)}{(a-b)(1-ab)}\frac{(q^3,1/bq^{2};q^4)_{(n+1)/4}(bq)^{(n+1)/4}}{(bq^4,q^{-1};q^4)_{(n+1)/4}} \notag\\
		&+\frac{(1-aq^n)(a-q^n)}{(a-b)(1-ab)}\frac{(q^3,q^5;q^4)_{(n+1)/4}}{(aq^4,q^4/a;q^4)_{(n+1)/4}}. 
\end{align*}	
Combining the above $q$-congruence and \eqref{sec-0-2}, we deduce that for any positive integer $n\equiv 3 \pmod{4}$, modulo $\Phi_n(q)(1-aq^n)(a-q^n)$, 
\[
		\sum_{k=0}^{(n-1)/2}[6k+1]\frac{(q,aq,q/a;q^2)_k(q^{-1}/b;q^4)_k}{(q^4,aq^4,q^4/a;q^4)_k(q^{4}b;q^2)_k}q^{k^2+3k}b^{k}
		\equiv\frac{(q^3,1/bq^{2};q^4)_{(n+1)/4}(bq)^{(n+1)/4}}{(bq^4,q^{-1};q^4)_{(n+1)/4}}.
\]	
Taking $a=1$, we arrive at
\begin{equation}	
		\sum_{k=0}^{(n-1)/2}[6k+1]\frac{(q;q^2)_k^3(q^{-1}/b;q^4)_k}{(q^4;q^4)_k^3(q^{4}b;q^2)_k}q^{k^2+3k}b^{k}
		\equiv\frac{(q^3,1/bq^{2};q^4)_{(n+1)/4}(bq)^{(n+1)/4}}{(bq^4,q^{-1};q^4)_{(n+1)/4}} \pmod{\Phi_n(q)^3}. \label{0-5}
\end{equation}
Upon application of \eqref{G-2-1}, we obtain
\begin{align*}	
		\sum_{k=0}^{(n-1)/2}&[6k+1]\frac{(q;q^2)_k^3(q^{-1}/b;q^4)_k}{(q^4;q^4)_k^3(q^{4}b;q^2)_k}q^{k^2+3k}b^{k}
		-\frac{(q^3,1/bq^{2};q^4)_{(n+1)/4}(bq)^{(n+1)/4}}{(bq^4,q^{-1};q^4)_{(n+1)/4}}\notag\\ 
		&=\sum_{k=0}^{(n-1)/2}[6k+1]\frac{(q;q^2)_k(q^{-1}/b;q^4)_k}{(q^4;q^4)_k(q^{4}b;q^2)_k}q^{k^2+3k}b^{k}\mathcal{A}_k(q).
\end{align*}
This, together with \eqref{0-2-3} and \eqref{0-5}, gives
\begin{equation}
		\sum_{k=0}^{(n-1)/2}[6k+1]\frac{(q;q^2)_k^3(q^{-1}/b;q^4)_k}{(q^4;q^4)_k^3(q^{4}b;q^2)_k}q^{k^2+3k}b^{k}\sum_{j=1}^{k}\left(\frac{q^{2j-1}}{[2j-1]^2}-\frac{q^{4j}}{[4j]^2}\right) \equiv 0 \pmod{\Phi_n(q)}. \label{0-6}
\end{equation}
It is evident that $\Phi_n(q)$ and $b-q^n$ are coprime. Combining \eqref{0-4} and \eqref{0-6}, we arrive at \eqref{tl-2}.  \qed

\noindent\textbf{Proof of Theorem \ref{thm-1}}: Taking the limit as $b\rightarrow 1$ in \eqref{tl-1}, we obtain that for a positive integer $n>5$ with $n\equiv 1\pmod{4}$, modulo $\Phi_n(q)^2$,
	\begin{equation}
	\sum_{k=0}^{(3n+1)/4}[6k+1]\frac{(q;q^2)_k^3(q^{-1};q^4)_k}{(q^4;q^4)_k^3(q^{4};q^2)_k}q^{k^2+3k}
	\left(\sum_{i=1}^{k}\frac{q^{2i-1}}{[2i-1]^2}-\sum_{i=1}^{k}\frac{q^{4i}}{[4i]^2}\right) 
	\equiv 0.  \label{0-7}
\end{equation}	

In view of \cite[Page 9]{G-Z}, Guo showed that
\[
[6n-5]\frac{(q;q^2)_{n-1}^3(q^{-1};q^4)_{n-1}}{(q^4;q^2)_{n-1}(q^4;q^4)_{n-1}^3}q^{n^2+n-2}\equiv -\frac{[n]^3[3n](1+q)q^{(9n^2-5n)/4+1}}{[2n]} \pmod{\Phi_n(q)^4}.
\]

For $(3n+1)/4 < k \leq n-2$, note that the numerator $(q;q^2)_k^3(q^{-1};q^4)_k$ has the factor $(1-q^n)^3(1-q^{3n})$, while the denominator $(q^4;q^4)_k^3(q^4;q^2)_k$ and $[4i]^2$ for $1\leq i\leq k$ are coprime to $\Phi_n(q)$. However, the denominator $[2i-1]^2$ contains the factor $(1-q^n)^2$ when $i=(n+1)/2$. Hence, we have the following $q$-congruence:
\[
	\sum_{k=(3n+1)/4}^{n-2}[6k+1]\frac{(q;q^2)_k^3(q^{-1};q^4)_k}{(q^4;q^4)_k^3(q^{4};q^2)_k}q^{k^2+3k}
	\left(\sum_{i=1}^{k}\frac{q^{2i-1}}{[2i-1]^2}-\sum_{i=1}^{k}\frac{q^{4i}}{[4i]^2}\right) 
	\equiv 0 \pmod{\Phi_n(q)^2}.
\]
It then follows that
\begin{align}
\sum_{k=0}^{n-1}&[6k+1]\frac{(q;q^2)_k^3(q^{-1};q^4)_k}{(q^4;q^4)_k^3(q^{4};q^2)_k}q^{k^2+3k}
\left(\sum_{i=1}^{k}\frac{q^{2i-1}}{[2i-1]^2}-\sum_{i=1}^{k}\frac{q^{4i}}{[4i]^2}\right) \notag \\
&\equiv -[6n-5]\frac{(q;q^2)_{n-1}^3(q^{-1};q^4)_{n-1}}{(q^4;q^2)_{n-1}(q^4;q^4)_{n-1}^3}q^{n^2+n-2}\sum_{i=1}^{n-1}\frac{q^{2i-1}}{[2i-1]^2}  \notag\\
&\equiv -\frac{3q(1+q)[n]}{2} \pmod{\Phi_n(q)^2}, \label{0-8}
\end{align}
where we used \eqref{0-2-1} in the last step. Meanwhile, it can be readily verified that   \eqref{0-8} holds for $n=5$.

Thus, \eqref{th-1} follows from  \eqref{0-7} and \eqref{0-8}.

\noindent\textbf{Proof of Theorem \ref{thm-2}}: Taking the limit as $b\rightarrow 1$ in \eqref{tl-2}, we immediately arrive at \eqref{th-2}.

\section*{Declarations}

\begin{flushleft}
	\textbf{Conflicts of Interest:}	The author declares that he has no conflict of interest.\\[13pt]
	
	\textbf{Data Availability Statement:} Not applicable.
\end{flushleft}

\end{document}